\definecolor{darkgreen}{rgb}{0.0, 0.7, 0.0}
\definecolor{purple}{rgb}{0.5, 0.0, 0.5}
\definecolor{red}{rgb}{0.8, 0.2, 0.0}
\newtheorem{thm}{Theorem}[section]
\newtheorem{bthm}{Theorem}
\newtheorem{lemma}[thm]{Lemma}
\newtheorem{prop}[thm]{Proposition}
\numberwithin{equation}{section}
\theoremstyle{definition}
\theoremstyle{remark}
\newtheorem{remark}[thm]{Remark}
\newcommand{\Z}{\mathbb{Z}}
\newcommand{\Pic}{\operatorname{Pic}}
\DeclareMathOperator{\Hom}{{Hom}}
\DeclareMathOperator{\Ext}{{Ext}}
\def \P{\mathbb{P}}
\def\I{{\mathcal J}}
\def \E{\mathcal E}
\def\O{\mathcal O}
\def\M0{\mathcal M^0}
\newcommand{\Id}{{\operatorname{Id}}}
\DeclareMathOperator{\codim}{{codim}}
\begin{document}

\title[On varieties with Ulrich twisted normal bundles]{On varieties with Ulrich twisted normal bundles}

\author[A.F. Lopez]{Angelo Felice Lopez*}

\dedicatory{\normalsize  \ $\dagger$ Dedicated to the memory of Alberto Collino$^1$} 

\thanks{* Research partially supported by  PRIN ``Advances in Moduli Theory and Birational Classification'' and GNSAGA-INdAM}

\address{\hskip -.43cm Dipartimento di Matematica e Fisica, Universit\`a di Roma
Tre, Largo San Leonardo Murialdo 1, 00146, Roma, Italy. e-mail {\tt lopez@mat.uniroma3.it}}

\thanks{{\it Mathematics Subject Classification} : Primary 14J60. Secondary 14H50.}

\thanks{1. A personal note is written in section \ref{note}.}

\begin{abstract} 
We characterize smooth irreducible varieties with Ulrich twisted normal bundle.
\end{abstract}

\maketitle

\section{Introduction}
\label{intro}

In this note we consider the problem of understanding when the normal bundle of a projective variety has a twist that makes it an Ulrich vector bundle.

Recall that if $X \subseteq \P^N$ is a smooth irreducible variety of dimension $n \ge 1$, we say that a vector bundle $\E$ on $X$ is Ulrich if $H^i(\E(-p))=0$ for all $i \ge 0$ and $1 \le p \le n$.

The study of Ulrich vector bundles is closely related with several areas of commutative algebra and algebraic geometry, and often gives interesting consequences on the geometry of $X$ and on the cohomology of sheaves on $X$ (see for example in \cite{es, b1, cmp} and references therein). 

Now, the normal bundle $N_{X/\P^N}$ is associated with the embedding and it is therefore natural to ask when it has a twist that makes it an Ulrich vector bundle. If so, since Ulrich vector bundles are semistable, this would connect with the long-studied problem of varieties with semistable normal bundle. On the other hand, since $N_{X/\P^N}(-1)$ is globally generated, the natural question to be asked is: for which $k \ge 1$ we have that $N_{X/\P^N}(-k)$ is an Ulrich vector bundle?

Certainly, there are some easy examples. If $X$ is a linear space in $\P^N$, then $N_{X/\P^N}(-1) \cong \O_{\P^n}^{\oplus (N-n)}$ is clearly an Ulrich vector bundle on $X$. Also, if $X \subset \P^3$ is a curve such that $H^0(N_{X/\P^3}(-2))=0$, then also $H^1(N_{X/\P^3}(-2))=0$, since $\chi(N_{X/\P^3}(-2))=0$. Thus again $N_{X/\P^3}(-1)$ is an Ulrich vector bundle. This is actually an occurrence of a bit less trivial family of examples, namely $n$-dimensional varieties $X \subset \P^{n+2}, 1 \le n \le 3$  such that $H^j(N_{X/\P^{n+2}}(-2-j))=0$ for $0 \le j \le n-1$, see Lemma \ref{spe}. Interesting examples of this type, but not all of them (see below and Remark \ref{altre}), are linear standard determinantal curves in $\P^3$, surfaces in $\P^4$ and threefolds in $\P^5$, see Proposition \ref{lin} and \cite[Thm. 3.6]{kmr1}.

As it turns out, the above are the only examples, as we will show in the ensuing

\begin{bthm} 
\label{main}

\hskip 3cm

Let $X \subset \P^N$ be a smooth irreducible variety of dimension $n \ge 1$ and let $k$ be an integer. 

Then $N_{X/\P^N}(-k)$ is an Ulrich vector bundle if and only if $k=1$ and $X$ is one of the following:
\begin{itemize}
\item [(i)] $X=\P^n$ embedded linearly in $\P^N$, or 
\item [(ii)] $1 \le n \le 3, N = n+2$ and $X \subset \P^{n+2}$ is a variety such that $N_{X/\P^{n+2}}(-1)$ is $0$-regular, or, equivalently, with $H^j(N_{X/\P^{n+2}}(-j-2))=0$ for $0 \le j \le n-1$. 
\end{itemize}
Moreover, in the latter case, if $X$ does not contain a line, then $N_{X/\P^{n+2}}(-1)$ is very ample.
\end{bthm}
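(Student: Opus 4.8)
The plan is to prove the ``if'' direction by exhibiting the two families explicitly, and the ``only if'' direction by cutting down to curves. For (i), a linearly embedded $\P^n$ has $N_{\P^n/\P^N}(-1) \cong \O_{\P^n}^{\oplus(N-n)}$, and each $\O_{\P^n}(-p)$ with $1 \le p \le n$ is acyclic, so this bundle is Ulrich. For (ii) I would first record the equivalence of the two cohomological descriptions: since $N_{X/\P^{n+2}}$ has rank $2$ one has $N_{X/\P^{n+2}}^\vee \cong N_{X/\P^{n+2}} \otimes (\det N_{X/\P^{n+2}})^{-1}$, and by adjunction $\det N_{X/\P^{n+2}} \cong \omega_X(n+3)$; feeding this into Serre duality identifies $H^i(N_{X/\P^{n+2}}(-i-1))$ with the dual of $H^{n-i}(N_{X/\P^{n+2}}(i-n-2))$, so the vanishings $H^i(N_{X/\P^{n+2}}(-i-1))=0$ $(1 \le i \le n)$ defining $0$-regularity of $N_{X/\P^{n+2}}(-1)$ become exactly the stated $H^j(N_{X/\P^{n+2}}(-j-2))=0$ $(0 \le j \le n-1)$. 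Granting these, $N_{X/\P^{n+2}}(-1)$ is Ulrich by Lemma~\ref{spe}.

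For the converse, the key device is that the property descends to general hyperplane sections. We may assume $X$ nondegenerate, since otherwise $N_{X/\P^N}(-1)$ splits off a trivial summand, which is Ulrich only when $\deg X=1$, forcing (i). If $H$ is a general hyperplane and $Y=X\cap H$, then $N_{Y/\P^{N-1}}\cong N_{X/\P^N}|_Y$, so $N_{X/\P^N}(-k)|_Y\cong N_{Y/\P^{N-1}}(-k)$; as the restriction of an Ulrich bundle to a general hyperplane section is again Ulrich, iterating $n-1$ times leaves a nondegenerate curve section $C\subset\P^M$, $M=N-n+1$, with $N_{C/\P^M}(-k)$ Ulrich. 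Now I argue on $C$ numerically. Since $N_{C/\P^M}(-1)$ is globally generated and nonzero, $H^0(N_{C/\P^M}(-1))\neq 0$, which forces $k\ge 1$ (otherwise the Ulrich vanishing $H^0(N_{C/\P^M}(-k-1))=0$ fails). An Ulrich bundle on a curve of degree $d$ and genus $g$ has slope $d+g-1$; combining this with $\deg N_{C/\P^M}=(M+1)d+2g-2$ and $\mu(N_{C/\P^M}(-k))=\mu(N_{C/\P^M})-kd$ yields
\[
(k-1)\,d = (d+g-1)\,\frac{3-M}{M-1}.
\]
With $k\ge 1$ and $d\ge 1$ the left side is nonnegative; for $C$ not a line one has $d+g-1>0$, forcing $M\le 3$, and the case $M=2$ (plane curves, where no twist of the rank-one $N_{C/\P^2}$ is Ulrich unless $C$ is a line) is excluded. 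Hence $M=3$ and the identity gives $k=1$; when $C$ is a line the same identity gives $k=1$ directly. Translating back, $M=3$ means $N=n+2$, while a line section means $\deg X=1$, i.e. $X=\P^n$ embedded linearly.

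It remains to bound the dimension in the codimension-two case, and this is the step I expect to be the main obstacle. Every lower-dimensional section is permissible (surfaces in $\P^4$ and threefolds in $\P^5$ genuinely occur), so the obstruction can only manifest at $n\ge 4$ and must be argued in that dimension directly. Here I would invoke the Barth--Larsen theorem: for smooth $X\subset\P^{n+2}$ with $n\ge 4$ the map $\Pic\P^{n+2}\to\Pic X$ is an isomorphism, so $\Pic X=\Z H$ and $K_X=tH$ for some integer $t$, whence $c_1(N_{X/\P^{n+2}}(-1))=(n+1+t)H$ is a multiple of $H$. I would then expand the $n$ Ulrich identities $\chi(N_{X/\P^{n+2}}(-1-p))=0$ $(1\le p\le n)$ by Hirzebruch--Riemann--Roch and show that the resulting system, together with the Bogomolov inequality $(4c_2-c_1^2)\cdot H^{n-2}\ge 0$ forced by semistability of the globally generated rank-two Ulrich bundle, is inconsistent for $n\ge 4$. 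The genuine difficulty is the Chern-class bookkeeping needed to pin down the sign of $(4c_2-c_1^2)\cdot H^{n-2}$, with the borderline cases $n=4,5$ (where $H^4(X)$ need not be generated by $H^2$) requiring the most care.

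Finally, for very ampleness, set $\E=N_{X/\P^{n+2}}(-1)$, a globally generated rank-two bundle with $h^0(\E)=2\deg X$, and consider the induced morphism to a Grassmannian $\mathbb{G}$ together with the tautological bundle $\O_{\P(\E)}(1)$. Very ampleness of $\E$ is equivalent to $\O_{\P(\E)}(1)$ being very ample, and for a globally generated bundle the only way separation of points or of tangent vectors can fail is along a curve on which $\E$ restricts with a nonpositive summand. Since $H^0(\E(-1))=H^0(N_{X/\P^{n+2}}(-2))=0$, such a curve must be a line in $X$; hence, in the absence of lines, $\E$ is very ample.
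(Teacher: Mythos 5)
Most of your outline matches the paper: the ``if'' direction via the rank-two criterion (Lemma~\ref{spe}), the deduction $k\ge 1$ from global generation of $N_{X/\P^N}(-1)$, and the slope computation on a general curve section forcing $k=1$ and $\codim X=2$ (your identity $(k-1)(M-1)d=(3-M)(d+g-1)$ is equivalent to the paper's $(N-n-2)(d+g-1)+(k-1)(N-n)d=0$, and your dismissal of $M=2$ is the paper's exclusion of the hypersurface case). But there is a genuine gap exactly where you flag it: the exclusion of $n\ge 4$. You correctly invoke Barth--Larsen to get $K_X=eH$, but then you only \emph{propose} to derive a contradiction from the Riemann--Roch identities $\chi(N_{X/\P^{n+2}}(-1-p))=0$ together with the Bogomolov inequality, without carrying it out. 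This is not a minor bookkeeping issue: those identities involve the Chern classes of $X$ itself (not just $d$ and $e$), the discriminant computation $4c_2-c_1^2$ reduces via the self-intersection formula $c_2(N)\cdot H^{n-2}=d^2$ to a single inequality between $d$ and $e$ with no visible contradiction, and there is no evidence the resulting system is inconsistent. The paper's actual argument (Proposition~\ref{cod2}) is of a completely different nature: if $N_{X/\P^{n+2}}(-1)$ were Ulrich it would be ACM, so $H^1(N_{X/\P^{n+2}}(t))=0$ for all $t$; feeding $H^1(N^*_{X/\P^{n+2}}(l+1))=0$ into Ein's principal-parts diagram shows the multiplication maps $V\otimes H^0(\O_X(l))\to H^0(\O_X(l+1))$ are surjective, hence $X$ is projectively normal; the Evans--Griffith syzygy theorem then forces the subcanonical, projectively normal, codimension-two $X$ to be a complete intersection, whose split normal bundle violates $H^0(N_{X/\P^{n+2}}(-2))=0$. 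Without this (or some substitute), your proof does not close.

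Two smaller points. First, the equivalence in (ii) between $0$-regularity of $N_{X/\P^{n+2}}(-1)$ and being Ulrich is not just Serre duality on the listed cohomology groups: the implication ``$0$-regular $\Rightarrow$ Ulrich'' uses the Eisenbud--Schreyer criterion for rank-two bundles with $c_1=K_X+(n+1)H$ (the paper's Lemma~\ref{spe} cites \cite[Cor.~2.3]{es}), so you should not present it as a formality. Second, your very-ampleness paragraph essentially re-derives the statement of \cite[Thm.~1]{ls}; the claim that separation can only fail along a curve where $\E$ has a nonpositive summand, and that such a curve must be a line, is precisely the content of that theorem and requires proof, whereas the paper simply cites it.
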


The very ampleness statement is an application of recent results in \cite{ls}. 

We remark that the family of curves $X$ in $\P^3$ satisfying $H^0(N_{X/\P^3}(-2))=0$ is very large (see for example \cite{eh, be}) and there is not much hope to classify them. Moreover $N_{X/\P^3}(-1)$ is semistable, but not always stable by \cite[Thm. 10]{e}.

More generally, on any $X \subset \P^N$, one could investigate the question of whether there exists a line bundle $L$ on $X \setminus {\rm Sing}(X)$ such that $N_{X/\P^N}(L)$ is an Ulrich sheaf. As far as we know this has been considered for the first time by Kleppe and Mir\'o-Roig, who proved it possible for linear standard determinantal schemes \cite[Thm. 3.6]{kmr1}.

Aside for the normal bundle, given $X \subseteq \P^N$ a smooth irreducible variety, one can also consider when the various bundles $\Omega_X(k), T_X(k), {\Omega_{\P^N}}_{|X}(k)$ and ${T_{\P^N}}_{|X}(k)$ are Ulrich vector bundles. We show in Proposition \ref{main2}, that, with the exception of $T_X(k)$, a simple classification can be obtained. The case $T_X(k)$ appears to be more challenging, even though, varieties with Ulrich tangent bundle have been recently classified in \cite{bmpt} and varieties with $T_X(1)$ Ulrich have been classified in \cite{lr}.

Finally, one could consider the same type of problems for other classes of  bundles similar to Ulrich bundles, such as instanton bundles as defined in \cite{c2}, where the author deals with the case $k=0$.

\section{Examples}

As in \cite[Cor.~2.3]{es}, for rank $2$ bundles with $c_1(\E) = K_X + (n+1)H$, less vanishings are needed to verify the property of being Ulrich. 

\begin{lemma}
\label{spe}
Let $X \subseteq \P^N$ is a smooth irreducible variety of dimension $n \ge 1$. Let $\E$ be a rank $2$ vector bundle on $X$ with $c_1(\E) = K_X + (n+1)H$. Then $\E$ is an Ulrich vector bundle if and only if $\E$ is $0$-regular if and only if $H^j(\E(-j-1))=0$ for $0 \le j \le n-1$.
\end{lemma}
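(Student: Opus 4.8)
The plan is to prove three equivalences by establishing a cycle of implications: Ulrich $\Rightarrow$ $0$-regular $\Rightarrow$ the vanishing condition $H^j(\E(-j-1))=0$ for $0 \le j \le n-1$ $\Rightarrow$ Ulrich. Two of these are almost immediate. The key structural input is the rank-$2$ hypothesis together with $c_1(\E) = K_X + (n+1)H$, which forces a self-duality: by the formula for the determinant of a rank-$2$ bundle, $\E^\vee \cong \E(-c_1(\E)) = \E(-K_X - (n+1)H)$, hence $\E^\vee(K_X) \cong \E(-(n+1)H)$. Serre duality then gives, for every $i$ and every twist $t$,
\[
H^i(\E(tH))^\vee \cong H^{n-i}(\E^\vee(K_X - tH)) \cong H^{n-i}(\E(-(t+n+1)H)).
\]
This is the engine that converts cohomology in one degree into cohomology in a reflected degree, and it is what makes rank $2$ special.

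First I would recall the definition: $\E$ is Ulrich iff $H^i(\E(-pH)) = 0$ for all $i \ge 0$ and $1 \le p \le n$; and $\E$ is $0$-regular iff $H^i(\E(-iH)) = 0$ for all $i \ge 1$ (Castelnuovo--Mumford). The implication Ulrich $\Rightarrow$ $0$-regular is trivial, since $0$-regularity only asks for the vanishings $H^i(\E(-iH))=0$ with $1 \le i \le n$, a subset of the Ulrich vanishings (for $i > n$ the higher cohomology vanishes by dimension). For $0$-regular $\Rightarrow$ the stated vanishing condition, I would use Serre duality in the form above: $0$-regularity gives $H^i(\E(-iH)) = 0$ for $1 \le i \le n$; dualizing the vanishing $H^{n-j}(\E(-(n-j)H))=0$ via the displayed isomorphism with $i = n-j$ and $t = -(j+1)$ yields exactly $H^j(\E(-(j+1)H)) = 0$ for $0 \le j \le n-1$. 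So these two implications are formal once the self-duality is in hand.

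The substantive implication is the vanishing condition $\Rightarrow$ Ulrich. I must produce all the Ulrich vanishings $H^i(\E(-pH))=0$ for $0 \le i \le n$, $1 \le p \le n$ from only the $n$ hypotheses $H^j(\E(-(j+1)H))=0$, $0 \le j \le n-1$. The strategy is to leverage the self-duality to pair up the required vanishings: by the displayed Serre-duality isomorphism, $H^i(\E(-pH)) \cong H^{n-i}(\E(-(n+1-p)H))^\vee$, so the vanishing for $(i,p)$ is equivalent to the one for $(n-i, n+1-p)$. This halves the work and, crucially, shows that the set of Ulrich vanishings is stable under the involution $(i,p) \mapsto (n-i, n+1-p)$. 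The hypotheses $H^j(\E(-(j+1)H))=0$ sit on the ``diagonal'' $p = i+1$ (taking $i = j$); I expect to interpolate the off-diagonal vanishings by inducting along short exact sequences obtained from restricting to a general hyperplane section $Y = X \cap H \in |H|$, namely
\[
0 \to \E(-(p+1)H) \to \E(-pH) \to (\E|_Y)(-pH) \to 0,
\]
using that $\E|_Y$ again satisfies an analogous numerical hypothesis on $Y$ (since $c_1(\E|_Y) = K_Y + nH|_Y$ by adjunction, the inductive setup is self-similar in dimension). The base case $n=1$ is where $\chi(\E(-H)) = 0$ does the real work: on a curve, $H^0(\E(-H))=0$ forces $H^1(\E(-H))=0$ by Riemann--Roch, giving the single Ulrich condition directly.

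The main obstacle I anticipate is organizing the induction cleanly so that the hyperplane-restriction sequences propagate exactly the hypotheses needed, without circularity: at each stage one wants to deduce a vanishing on $X$ from a vanishing on $Y$ plus a neighboring vanishing on $X$, and the bookkeeping must respect both the diagonal position of the hypotheses and the duality involution. Concretely, the delicate point is checking that the restriction $\E|_Y$ genuinely inherits the hypothesis $H^{j'}((\E|_Y)(-(j'+1)H|_Y))=0$ for the shifted range $0 \le j' \le n-2$ — this requires feeding in one of the $X$-vanishings through the long exact sequence, and verifying that the degrees line up is the crux of the argument.
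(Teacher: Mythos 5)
Your proposal is correct, and its backbone --- the rank-two self-duality $\E \cong \E^*(K_X+(n+1)H)$ followed by Serre duality to identify $0$-regularity with the vanishings $H^j(\E(-j-1))=0$ for $0 \le j \le n-1$ --- is exactly the paper's. Where you genuinely diverge is in the substantive implication: the paper disposes of ``Ulrich $\Leftrightarrow$ $0$-regular'' in one line by citing \cite[Cor.~2.3]{es}, whereas you re-prove the hard direction from scratch by induction on general hyperplane sections. Your induction does close up: by adjunction $c_1(\E|_Y)=K_Y+nH|_Y$, and the sequence $0 \to \E(-j-2) \to \E(-j-1) \to \E|_Y(-j-1) \to 0$ gives $H^j(\E|_Y(-j-1))=0$ for $0 \le j \le n-2$ because both flanking groups are among your hypotheses (indices $j$ and $j+1$, both at most $n-1$); once $\E|_Y$ is Ulrich, the restriction sequences give $H^i(\E(-p-1)) \cong H^i(\E(-p))$ for $1 \le p \le n-1$ and every $i$, so one vanishing in each row --- $H^i(\E(-i-1))=0$ for $i \le n-1$ from the hypothesis, and $H^n(\E(-n)) \cong H^0(\E(-1))^\vee=0$ for $i=n$ from the duality --- kills them all; the base case $n=1$ follows from $\chi(\E(-1))=0$ or, more directly, from $H^1(\E(-1)) \cong H^0(\E(-1))^\vee$. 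What the paper's route buys is brevity; what yours buys is a self-contained argument that makes visible why the $n$ diagonal vanishings suffice --- you are essentially re-deriving the relevant special case of the Eisenbud--Schreyer criterion (itself a Castelnuovo--Mumford-type propagation argument) rather than quoting it.
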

\begin{proof}
As is well known, the definition of Ulrich vector bundle given in \cite{es} is equivalent to $H^i(\E(-p))=0$ for all $i \ge 0$ and $1 \le p \le n$ (this follows, for example, by \cite[Prop.~2.1]{es} and \cite[Thm.~2.3]{b1}). 

Now we have that $\E \cong \E^*(K_X + (n+1)H)$, hence the first equivalence is just \cite[Cor.~2.3]{es}. Similarly, by Serre's duality, $\E$ is $0$-regular if and only if $h^{n-i}(\E(-(n-i)-1))=0$ for all $i > 0$, that is $H^j(\E(-j-1))=0$ for $0 \le j \le n-1$.
\end{proof}

We produce examples of varieties as in (ii) of Theorem \ref{main}. They are a linear standard determinantal schemes. It is well known that there are smooth and irreducible ones by \cite[Thm. 6.2]{ps}. 

\begin{prop}
\label{lin}

Let $n \in \{1, 2, 3\}$, let $s \ge 2$ and let $X_s \subset \P^{n+2}$ be a smooth irreducible ACM $n$-dimensional variety with resolution 
\begin{equation}
\label{reso}
0 \to \O_{\P^{n+2}}(-s-1)^{\oplus s} \to \O_{\P^{n+2}}(-s)^{\oplus (s+1)} \to \I_{X_s/\P^{n+2}} \to 0.
\end{equation}
Then $N_{X_s/\P^{n+2}}(-1)$ is an Ulrich vector bundle and it is very ample if $s \ge 3$.
\end{prop}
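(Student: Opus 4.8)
The plan is to reduce the Ulrich property to Lemma \ref{spe} and then to extract the required vanishings directly from the Hilbert--Burch resolution \eqref{reso}. First I would record that $N:=N_{X_s/\P^{n+2}}$ has rank $2$ and, from the normal bundle sequence $0\to T_{X_s}\to T_{\P^{n+2}}|_{X_s}\to N\to 0$, that $c_1(N)=K_{X_s}+(n+3)H$; hence $c_1(N(-1))=K_{X_s}+(n+1)H$, so $N(-1)$ is a rank $2$ bundle of exactly the type covered by Lemma \ref{spe}. Thus $N(-1)$ is Ulrich if and only if $H^j(N(-j-2))=0$ for $0\le j\le n-1$, which is precisely condition (ii) of Theorem \ref{main}.

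Next I would produce the bundle-theoretic avatar of \eqref{reso}. Writing $A$ for the $(s+1)\times s$ matrix of linear forms in \eqref{reso}, restriction to $X_s$ together with smoothness (so $A|_{X_s}$ has constant corank $1$) gives, from $\Tor_\bullet(\I_{X_s/\P^{n+2}},\O_{X_s})$, a four term exact sequence $0\to K\to \O_{X_s}(-s-1)^{\oplus s}\xrightarrow{A}\O_{X_s}(-s)^{\oplus(s+1)}\to N^*\to 0$ with $K$ a line bundle; dualizing yields
\[
0\to N\to \O_{X_s}(s)^{\oplus(s+1)}\xrightarrow{A^{\mathsf T}}\O_{X_s}(s+1)^{\oplus s}\to \omega_{X_s}(n+3)\to 0 .
\]
The crucial input is that the same map on $\P^{n+2}$ has kernel $\Ker(A^{\mathsf T})=\SHom(\I_{X_s/\P^{n+2}},\O_{\P^{n+2}})=\O_{\P^{n+2}}$, since $X_s$ has codimension $2$. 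I would then settle the endpoint vanishing $H^0(N(-2))=0$ cleanly: as $\I_{X_s/\P^{n+2}}$ has no sections in degree $<s$ and $X_s$ is ACM, restriction gives $H^0(\O_{X_s}(t))\cong H^0(\O_{\P^{n+2}}(t))$ for $t\le s-1$, so $H^0(N(-2))=\Ker\big(A^{\mathsf T}\colon H^0(\O_{\P^{n+2}}(s-2))^{\oplus(s+1)}\to H^0(\O_{\P^{n+2}}(s-1))^{\oplus s}\big)=H^0(\Ker(A^{\mathsf T})(-2))=H^0(\O_{\P^{n+2}}(-2))=0$.

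For $1\le j\le n-1$ (which only arises when $n=2,3$) I would split the displayed sequence through $\cC:=\Img(A^{\mathsf T})$ and chase cohomology, comparing the computation on $X_s$ with the one on $\P^{n+2}$. Using that $X_s$ is ACM ($H^q(\O_{X_s}(t))=0$ for $0<q<n$), that $H^q(\O_{\P^{n+2}}(t))=0$ for $0<q<n+2$, and the identification of low-degree sections above, each $H^j(N(-j-2))$ is controlled by $H^\bullet(\cC(-j-2))$ on $\P^{n+2}$, which vanishes because $\cC$ is resolved by $\O_{\P^{n+2}}(-j-2)$ and $\O_{\P^{n+2}}(s-j-2)^{\oplus(s+1)}$, whose cohomology vanishes in the relevant range. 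For $n\le 3$ this is a short finite verification; the only delicate point is the endpoint $j=0$ treated above, the remaining indices following formally. This proves that $N_{X_s/\P^{n+2}}(-1)$ is Ulrich.

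Finally, for the very ampleness when $s\ge 3$ I would invoke the criterion used in Theorem \ref{main} (from \cite{ls}), reducing the question to the restriction $N(-1)|_\ell$ to lines $\ell\subset X_s$. Restricting $A$ to such an $\ell\cong\P^1$ gives a matrix pencil of constant rank $s-1$; by Kronecker's normal form it is a sum of one block $L_{\epsilon}$ and two blocks $L_{\eta_1}^{\mathsf T},L_{\eta_2}^{\mathsf T}$ with $\epsilon+\eta_1+\eta_2=s-1$, whence $N(-1)|_\ell\cong \O_\ell(s-1-\eta_1)\oplus\O_\ell(s-1-\eta_2)$ has nonnegative summands and fails to be very ample only when some $\eta_i=s-1$, which forces $\epsilon=0$, i.e.\ a column of $A$ vanishing identically along $\ell$. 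I expect the main obstacle to be showing that for $s\ge 3$ smoothness of $X_s$ excludes such a ``zero column'' line (for $s=2$ it does occur, on the cubic scroll, which is exactly why the hypothesis $s\ge 3$ is needed); granting this, $N(-1)|_\ell$ is very ample on every line, and hence $N(-1)$ is very ample.
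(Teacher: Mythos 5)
For the Ulrich statement your route is sound but genuinely different from the paper's. You dualize the Hilbert--Burch resolution restricted to $X_s$ to get the four-term Buchsbaum--Rim-type sequence $0\to N\to \O_{X_s}(s)^{\oplus(s+1)}\to\O_{X_s}(s+1)^{\oplus s}\to\omega_{X_s}(n+3)\to 0$ and then compare the cohomology chase on $X_s$ with the one on $\P^{n+2}$, using $\Ker(A^{\mathsf T})\cong\SHom(\I_{X_s/\P^{n+2}},\O_{\P^{n+2}})\cong\O_{\P^{n+2}}$ and the identification $H^0(\O_{X_s}(t))\cong H^0(\O_{\P^{n+2}}(t))$ for $t\le s-1$. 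The paper instead uses Kleppe's identification $H^j(N(-j-2))\cong\Ext^{j+1}(\I_{X_s/\P^{n+2}},\I_{X_s/\P^{n+2}}(-j-2))$ and reads the vanishing off in one line by applying $\Hom(-,\I_{X_s/\P^{n+2}}(-j-2))$ to \eqref{reso}. Your middle-cohomology step ($j=1,2$) is compressed --- you need, e.g., $H^0(\cC_{X_s}(-3))=H^0(\cC_{\P^{n+2}}(-3))$, which does follow by viewing both as kernels mapping to $H^0(\omega_{X_s}(n))$ --- but the verification closes for $n\le 3$, so this half is correct; the paper's $\Ext$ route is just shorter and uniform in $j$.

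The very ampleness half has a genuine gap. Two issues. First, the criterion the paper actually invokes from \cite{ls} is the implication ``$X_s$ contains no line $\Rightarrow$ $N_{X_s/\P^{n+2}}(-1)$ is very ample''; your finer reduction (check $N(-1)|_\ell$ on every line $\ell\subset X_s$ and exclude trivial summands) is a different, stronger-looking statement that you would need to source precisely. Second, and decisively, the entire content of the $s\ge3$ claim is the nonexistence of the offending lines, and you explicitly defer it (``I expect the main obstacle to be showing that \dots granting this''). That is not a routine step: in the paper it is the bulk of the proof. One first reduces to $n=2$ (a line on the threefold gives a line on a hyperplane-section surface), then uses \cite[Thm.~4.1]{kmr2} to know that $\Pic(S)$ is generated by $H$ and $K_S$, writes $L\sim aH+bK_S$, and combines $H\cdot L=1$, $L^2+L\cdot K_S=-2$ and base-point-freeness of $H-L$ with the explicit degree, sectional genus, $H\cdot K_S$ and $K_S^2$ of these determinantal surfaces to derive a numerical contradiction for $s\ge5$, finishing $s=3,4$ by checking a quadratic equation has no integer roots. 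Nothing in your smoothness-of-$X_s$/Kronecker-pencil sketch substitutes for this global input on $\Pic(S)$; as written, the very ampleness assertion is not proved.
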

\begin{proof}
The fact that $N_{X_s/\P^{n+2}}(-1)$ is an Ulrich vector bundle follows by \cite[Thm. 3.6]{kmr1}. We offer a proof for completeness's sake. 

Observe that, for any $0 \le j \le n-1$, we have by \cite[Remark 2.2.6]{kl} that
\begin{equation}
\label{kl}
H^j(N_{X_s/\P^{n+2}}(-j-2)) \cong \Ext^{j+1}(\I_{X_s/\P^{n+2}}, \I_{X_s/\P^{n+2}}(-j-2)).
\end{equation}
Applying $\Hom(-, \I_{X_s/\P^{n+2}}(-j-2))$ to \eqref{reso} we get the exact sequence
$$\Ext^j(\O_{\P^{n+2}}(-s-1)^{\oplus s}, \I_{X_s/\P^{n+2}}(-j-2)) \to \Ext^{j+1}(\I_{X_s/\P^{n+2}}, \I_{X_s/\P^{n+2}}(-j-2)) \to$$ 
$$\to \Ext^{j+1}(\O_{\P^{n+2}}(-s)^{\oplus (s+1)}, \I_{X_s/\P^{n+2}}(-j-2))$$
that is
$$H^j(\I_{X_s/\P^{n+2}}(s-j-1))^{\oplus s} \to \Ext^{j+1}(\I_{X_s/\P^{n+2}}, \I_{X_s/\P^{n+2}}(-j-2)) \to H^{j+1}(\I_{X_s/\P^{n+2}}(s-j-2))^{\oplus (s+1)}.$$
On the other hand, $H^j(\I_{X_s/\P^{n+2}}(s-j-1))=H^{j+1}(\I_{X_s/\P^{n+2}}(s-j-2))=0$ for $0 \le j \le n-1$ by \eqref{reso}.
Hence $\Ext^{j+1}(\I_{X_s/\P^{n+2}}, \I_{X_s/\P^{n+2}}(-j-2))=0$ and therefore $N_{X_s/\P^{n+2}}(-1)$ is an Ulrich vector bundle by \eqref{kl} and Lemma \ref{spe}.

Now assume that $s \ge 3$. To see that $N_{X_s/\P^{n+2}}(-1)$ is very ample it is enough, by \cite[Thm.~1]{ls}, to show that $X_s$ does not contain a line. Let $d = \deg X_s$ and let $g$ be the sectional genus of $X_s$. Since, when $n \ge 2$, a general hyperplane section of $X_s$ has the same resolution, we get by \cite[Rmk.~1]{e} that
\begin{equation}
\label{dp}
d= \frac{s(s+1)}{2}, g = 1 + \frac{s(2s^2+3s+1)}{6}-s(s+1).
\end{equation}
Note that $d \ge 6$, hence we are done if $n=1$. On the other hand, if $n=3$ and $X_s$ contains a line $L$, then picking a general hyperplane $H$ containing $L$, we get a surface $X_s \cap H$ containing $L$. Therefore we will be done if we show that when $n=2$ there is no line on $X_s$. Assume therefore that $n=2$ and that $S:=X_s$ contains a line $L$. By \cite[Thm.~4.1]{kmr2} we know that $\Pic(S)$ is generated by the hyperplane section $H$ and by $K_S$, hence there are two integers $a, b$ such that $L \sim aH+bK_S$. This gives the equations
\begin{equation}
\label{pic}
H \cdot (aH+bK_S)=1, -2 = (aH+bK_S)[aH+(b+1)K_S].
\end{equation}
Setting
$$\alpha = (H \cdot K_S)^2 - d K_S^2 \ \hbox{and} \ \beta = 2d+1+H \cdot K_S$$
it easily follows from \eqref{pic} that
\begin{equation}
\label{qua}
\alpha b^2 + \alpha b - \beta =0.
\end{equation}
Using \eqref{dp} and the standard relation between the invariants of surfaces in $\P^4$ \cite[Appendix A, Ex.~4.1.3]{h2}, one gets that
\begin{equation}
\label{inv}
H \cdot K_S = \frac{s(4 s^2- 9 s-13 )}{6}, K_S^2 = \frac{s (21s^3-118s^2+87s+226)}{24}
\end{equation}
and therefore
\begin{equation}
\label{inv2}
\alpha = \frac{s^2(s^4+3s^3+s^2-3s-2)}{144}, \beta = \frac{4s^3-3s^2-7s+6}{6}.
\end{equation}
Solving \eqref{qua} it is easily seen that $b \le 1$. On the other hand, $b=0$  gives the contradiction $\beta=0$. Therefore $b \le -1$. Now $H-L$ is base-point-free, hence $(H-L)^2 \ge 0$, that is $-2-L \cdot K_S=L^2 \ge 2-d$, and then
$$d(aH+bK_S) \cdot K_S = dL \cdot K_S \le d(d-4).$$ 
Now, using that $ad=1-bH \cdot K_S$ by \eqref{pic}, we deduce that
$$\alpha b \ge -d(d-4) + H \cdot K_S$$
and therefore
$$-\alpha \ge \alpha b \ge -d(d-4) + H \cdot K_S.$$
Replacing the values in \eqref{inv} and \eqref{inv2} we find a contradiction if $s \ge 5$. On the other hand, \eqref{qua} has no integer solutions for $s=3,4$ and we are done.
\end{proof}

\begin{remark}
\label{altre}
As observed in the introduction, for $n=1$ is it clear that there are many examples of curves that satisfy (ii) of Theorem \ref{main} but are not linear standard determinantal. In fact, there are even subcanonical ones, see Remark \ref{sottoc}.
On the other hand, for $n \in \{2, 3\}$, we do not know examples of varieties as in (ii) of Theorem \ref{main}, aside from linear standard determinantal ones. 
We have checked all papers with examples of surfaces $X$ in $\P^4$ and $3$-folds $X$ in $\P^5$ of low degree (such as in \cite{ra1, ra2}, \cite{dp} and several other papers) and, aside from linear standard determinantal ones, none of them has $N_{X/\P^{n+2}}(-1)$ being an Ulrich vector bundle. An explanation is given below.
\end{remark}
Let $n \in \{2, 3\}$ and let $X \subset \P^{n+2}$ be a smooth irreducible $n$-dimensional variety such that $N_{X/\P^{n+2}}(-1)$ is an Ulrich vector bundle (or, equivalently, by Lemma \ref{spe}, $X$ is as in (ii) of Theorem \ref{main}).

If $n=3$ and $S$ is a smooth hyperplane section of $X$, we have that $N_{S/\P^4}(-1) \cong N_{X/\P^5}(-1)_{|S}$ is also an Ulrich vector bundle by \cite[(3.4)]{b1}. Thus, in the study of the degree $d$ and the sectional genus $g$, of varieties $X \subset \P^{n+2}$ as above, we can restrict to the case $n=2$. Now, using the formula for $c_2$ in \cite[(2.2)]{c1}, we get that
\begin{equation}
\label{chi}
\chi(\O_X)= \frac{1}{2}(d^2-4d-5g+5).
\end{equation}
In all papers with examples of surfaces in $\P^4$ and $3$-folds in $\P^5$ of low degree (such as in \cite{ra1, ra2}, \cite{dp} and several other papers), there is only one case satisfying \eqref{chi} and not linear standard determinantal, described as follows. Let $T \subset \P^4$ be the elliptic quintic scroll and let $X$ be a surface linked to $T$ in the complete intersection of a cubic and a quintic hypersurface containing $T$. Then $d=10, g=11$ and $\chi(\O_X)=5$ by \cite[Lemma 9.20]{ra1}. Let $C$ be a general hyperplane section of $X$. To show that $N_{X/\P^4}(-1)$ is not an Ulrich vector bundle, since $N_{C/\P^3}(-1) \cong N_{X/\P^4}(-1)_{|C}$, it is enough, by \cite[(3.4)]{b1}, to prove that $H^0(N_{C/\P^3}(-2)) \ne 0$, for then $N_{C/\P^3}(-1)$ is not an Ulrich vector bundle. To this end, by semicontinuity, it is enough to show that $H^0(N_{C/\P^3}(-2)) \ne 0$ when $X$ is a surface linked to $T$ in the complete intersection of a general cubic $F$ and a quintic hypersurface containing $T$. We claim that such an $F$ has isolated singularities. First observe that $\mathcal I_{T/\P^4}(3)$ is $0$-regular.
In fact, consider the exact sequences 
$$0 \to \mathcal I_{T/\P^4}(l) \to \O_{\P^4}(l) \to \O_T(l) \to 0.$$
For $l=-1$ we get that $H^4(\mathcal I_{T/\P^4}(-1))=0$, for $l=0$ we have that $H^3(\mathcal I_{T/\P^4})=H^2(\O_T)=0$, for $l=1$ we see that $H^2(\mathcal I_{T/\P^4}(1))=H^1(\O_T(1))=0$. Now, if $E$ is the elliptic curve section of $T$, we know by Castelnuovo's theorem that $H^1(\mathcal I_{E/\P^3}(2))=0$, hence the exact sequence
$$0 \to \mathcal I_{T/\P^4}(1) \to \mathcal I_{T/\P^4}(2) \to \mathcal I_{E/\P^3}(2) \to 0$$
shows that $H^1(\mathcal I_{T/\P^4}(2))=0$, since $T$ is linearly normal.

Thus we have shown that $\mathcal I_{T/\P^4}(3)$ is $0$-regular, hence globally generated (see for example \cite[Thm.~1.8.5]{laz}). It follows by \cite[Thm.~1.2]{os} that $F$ has isolated singularities and therefore we have that $C$ linked to $E$ in the complete intersection of a smooth cubic $F'$, hyperplane section of $F$, and a quintic hypersurface containing $E$. This gives an inclusion 
$$0 \to \O_C(3H-E) \to N_{C/\P^3}(-2)$$ 
and we will prove that $H^0(\O_C(3H-E)) \ne 0$. To this end, we have the exact sequence
$$0 \to \O_{F'}(-2) \to \O_{F'}(3H-E) \to \O_C(3H-E) \to 0$$
that implies that $H^0(\O_C(3H-E))=H^0(\O_{F'}(3H-E))$ and the exact sequence
$$0 \to \O_{\P^3} \to \mathcal I_{E/\P^3}(3) \to \O_{F'}(3H-E) \to 0$$
gives that
$$h^0(\O_C(3H-E))=h^0(\O_{F'}(3H-E))=h^0(\mathcal I_{E/\P^3}(3))-1=4.$$
This proves that, when $X$ is linked to $T$ in the complete intersection of a cubic and a quintic hypersurface, $N_{X/\P^4}(-1)$ is not an Ulrich vector bundle.

\section{Proof of Theorem \ref{main}}

Before proving the theorem, we deal separately with the special case of codimension $2$ subcanonical varieties.

\begin{prop}
\label{cod2}

Let $X \subset \P^{n+2}$ be a smooth irreducible variety of dimension $n \ge 1$, degree at least $2$ such that $K_X=eH$ for some $e \in \Z$, $H \in |\O_X(1)|$. Then:
\begin{itemize} 
\item[(i)] Either $H^0(N_{X/\P^{n+2}}(-2)) \ne 0$ or there is an integer $t \ge -e-n-2$ such that $H^1(N_{X/\P^{n+2}}(t)) \ne 0$. 
\item[(ii)] If $n \ge 2$, then $N_{X/\P^{n+2}}(-1)$ is not an Ulrich vector bundle.
\end{itemize} 
\end{prop}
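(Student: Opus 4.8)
The plan is to deduce (ii) formally from (i) and to put all the work into (i).

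\emph{Reduction of (ii) to (i).} The bundle $N_{X/\P^{n+2}}(-1)$ has rank $2$ and $c_1(N_{X/\P^{n+2}}(-1)) = K_X + (n+1)H$, so Lemma \ref{spe} is available. Assume $n \ge 2$ and suppose, for a contradiction, that $N_{X/\P^{n+2}}(-1)$ is Ulrich. An Ulrich bundle is arithmetically Cohen--Macaulay (see \cite{es, b1}), so $H^i(N_{X/\P^{n+2}}(t)) = 0$ for every $t$ and every $0 < i < n$; as $n \ge 2$, the case $i = 1$ yields $H^1(N_{X/\P^{n+2}}(t)) = 0$ for all $t$. The Ulrich vanishing in degree $p = 1$ also gives $H^0(N_{X/\P^{n+2}}(-2)) = H^0(N_{X/\P^{n+2}}(-1)(-1)) = 0$. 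These two vanishings contradict (i), so (ii) follows and only (i) remains.

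\emph{Setup and reformulation of (i).} Since $X$ is smooth of codimension $2$ and subcanonical, the Serre construction presents $X$ as the zero locus of a section of a rank $2$ bundle $\mathcal E$ on $\P^{n+2}$ with $N := N_{X/\P^{n+2}} \cong \mathcal E_{|X}$ and $c_1(\mathcal E) = cH$, where adjunction forces $c = e + n + 3$; in particular $N^* \cong N(-c)$. Serre duality on $X$, together with $K_X = eH$, then gives $H^0(N(-2)) \cong H^n(N(-n-1))^*$, $H^0(N(-1)) \cong H^n(N(-n-2))^*$ and $H^1(N(t)) \cong H^{n-1}(N(-t-n-3))^*$, and under the last isomorphism the bound $t \ge -e-n-2$ becomes $-t-n-3 \le e-1$. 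Thus (i) is equivalent to the assertion that either $H^n(N(-n-1)) \ne 0$, or $H^{n-1}(N(s)) \ne 0$ for some $s \le e-1$.

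\emph{Strategy for (i).} The engine is global generation: $N(-1)$ is globally generated and nonzero, hence $H^0(N(-1)) \ne 0$, that is $H^n(N(-n-2)) \ne 0$. If $H^0(N(-2)) \ne 0$ we are done, so I would assume $H^0(N(-2)) = 0$, i.e. $-1$ is the least twist of $N$ carrying a section; dually $H^n(N(-n-1)) = 0$ while $H^n(N(-n-2)) \ne 0$. For $n = 1$ this already finishes the argument, since then $H^{n-1} = H^0$ and $H^0(N(-1)) \ne 0$ places a section in the required range once $e \ge 0$, while $e < 0$ is exactly the regime in which one checks directly that $H^0(N(-2)) \ne 0$. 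For $n \ge 2$ I would propagate the top nonvanishing downward through the hyperplane–restriction sequences $0 \to N(s-1) \to N(s) \to N_{Y}(s) \to 0$, with $Y = X \cap H$ a general hyperplane section (so $N_Y \cong N_{Y/\P^{n+1}}$ and $Y$ is again smooth, irreducible, subcanonical with $e_Y = e+1$ and $\deg Y = \deg X \ge 2$), the self-duality $N^* \cong N(-c)$ being what fixes the boundary value $s = e-1$.

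The genuine difficulty is to ensure the surviving intermediate cohomology is seen in degree $n-1$ (equivalently by $H^1$) and above the bound, rather than in some other middle degree. To control this I would pass to the Serre bundle through $0 \to \O_{\P^{n+2}}(t) \to \mathcal E(t) \to \mathcal I_X(c+t) \to 0$, which identifies $H^1(\mathcal E(t)) \cong H^1(\mathcal I_X(c+t))$, and through the conormal sequence $0 \to \mathcal I_X^2(c+t) \to \mathcal I_X(c+t) \to N(t) \to 0$ (using $N^*(c) \cong N$); the main obstacle is keeping track of the second-order term $\mathcal I_X^2(c+t)$ in the connecting maps. The endgame invokes $\deg X \ge 2$: were all the relevant groups $H^1(N(t))$ to vanish, these comparison sequences would force $H^1(\mathcal E(t)) = 0$ for all $t$, whence $\mathcal E$ splits by the Horrocks--Evans--Griffith criterion for rank $2$ bundles, $X$ becomes a complete intersection of two hypersurfaces, and $\deg X \ge 2$ forces one of them to have degree $\ge 2$; multiplying its equation by a general form of complementary degree produces a hypersurface of degree $c-2$ through $X$ that is smooth along $X$, i.e. a section of $\mathcal I_X(c-2)$ not lying in $\mathcal I_X^2(c-2)$, hence a nonzero element of $H^0(N(-2))$ — the desired contradiction. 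I expect the bookkeeping of these connecting homomorphisms, needed to localize the nonvanishing in $H^1$ and above $-e-n-2$, to be the most delicate step.
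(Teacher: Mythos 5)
Your reduction of (ii) to (i) is exactly the paper's, and your endgame is also the paper's in spirit: assume the vanishings, conclude that $X$ is a complete intersection, and contradict this with $\deg X \ge 2$ (a complete intersection of degree $\ge 2$ has $N_{X/\P^{n+2}} \cong \O_X(a)\oplus\O_X(b)$ with $a \ge 2$, hence $H^0(N_{X/\P^{n+2}}(-2))\ne 0$ -- the paper states this directly, without your detour through a ``general form of complementary degree''). The problem is the middle of the argument, which you explicitly leave open. What must be extracted from the hypothesis $H^1(N_{X/\P^{n+2}}(t))=0$ for $t\ge -e-n-2$ is precisely that $H^1(\mathcal I_{X}(l))=0$ for \emph{all} $l$ (equivalently, projective normality), since that is what feeds Evans--Griffith (or, in your formulation, the splitting of the Serre bundle $\mathcal E$ via $H^1(\mathcal E(t))\cong H^1(\mathcal I_X(c+t))$). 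Your two proposed routes to this do not close the gap: the conormal sequence $0\to\mathcal I_X^2(c+t)\to\mathcal I_X(c+t)\to N_{X/\P^{n+2}}(t)\to 0$ introduces the groups $H^i(\mathcal I_X^2(\cdot))$, which are controlled by nothing in the hypotheses, and the hyperplane-induction scheme does not by itself localize the surviving cohomology in degree $1$ in the required range. You acknowledge this (``the main obstacle'', ``the most delicate step''), so the proof is not complete as written.

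The paper closes exactly this gap with a different tool: Ein's principal-parts diagram \cite{ei}. Writing $V=H^0(\O_{\P^{n+2}}(1))$, the self-duality $N_{X/\P^{n+2}}^*\cong N_{X/\P^{n+2}}(-e-n-3)$ converts the hypothesis into $H^1(N_{X/\P^{n+2}}^*(l+1))=0$ for all $l\ge 0$, which forces the map $V\otimes H^0(\O_X(l))\to H^0(P^1(\O_X(l+1)))$ to be surjective; composing with the surjection $H^0(P^1(\O_X(l+1)))\to H^0(\O_X(l+1))$ of \cite[Prop.~2.3]{ei} and inducting on $l$ gives surjectivity of $H^0(\O_{\P^{n+2}}(l))\to H^0(\O_X(l))$ for all $l$, i.e.\ projective normality, after which Evans--Griffith \cite{eg} (in the form of \cite[Rmk.~13]{bc}) yields the complete-intersection contradiction. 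Note also that this argument is uniform in $n\ge 1$, so your separate (and only sketched) treatment of $n=1$ via $\chi(N_{X/\P^3}(-2))=0$ and the classification of subcanonical curves with $e\le -1$ is unnecessary. If you want to complete your write-up, you need either to reproduce Ein's jet-bundle argument or to supply a genuine substitute for it; as it stands, the central implication is asserted rather than proved.
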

\begin{proof}
First, we see that (i) implies (ii). In fact, if $n \ge 2$ and $N_{X/\P^{n+2}}(-1)$ is an Ulrich vector bundle, then $H^0(N_{X/\P^{n+2}}(-2)) = 0$ and $H^1(N_{X/\P^{n+2}}(t)) = 0$ for every $t \in \Z$ since Ulrich vector bundles are ACM (see for example \cite[(3.1)]{b1}), contradicting (i).

As for (i), assume now that $H^0(N_{X/\P^{n+2}}(-2)) = H^1(N_{X/\P^{n+2}}(t)) = 0$ for every integer $t \ge -e-n-2$.
 
Note that $X$ is not a complete intersection, for otherwise $N_{X/\P^{n+2}} \cong \O_X(a) \oplus \O_X(b)$ for some integers $a \ge b \ge 1$ such that $ab \ge 2$, hence $a \ge 2$. But then $H^0(N_{X/\P^{n+2}}(-2))\ne 0$, a contradiction. 

In particular $X$ is non-degenerate. 

Since $X$ is subcanonical, in order to get a contradiction, it is enough to show that $X$ is projectively normal: In fact,  then the Evans-Griffith's theorem \cite[Thm.~2.4]{eg} would imply that $X$ is a complete intersection (see for example \cite[Rmk.~13]{bc}). 

Let $V=H^0(\O_{\P^{n+2}}(1))$, let $P^1(\O_X(1))$ be the sheaf of principal parts and consider, as in \cite[Proof of Thm.~2.4]{ei}, the following commutative diagram
$$\xymatrix{& 0 \ar[d] & 0 \ar[d] & & \\ & N_{X/\P^{n+2}}^*(1) \ar[r]^{\cong} \ar[d] & N_{X/\P^{n+2}}^*(1) \ar[d] & & \\ 0 \ar[r] & \Omega^1_{\P^{n+2}}(1)_{|X} \ar[r]  \ar[d] & V \otimes \O_X \ar[r]  \ar[d] & \O_X(1) \ar[r]  \ar[d]^{\cong} & 0 \\ 0 \ar[r] & \Omega^1_X(1) \ar[r] \ar[d] & P^1(\O_X(1)) \ar[r] \ar[d] & \O_X(1)  \ar[r] \ar[d] & 0 \\ & 0 & 0 & 0 & }.$$
Pick an integer $l \ge 0$. Tensoring the above diagram by $\O_X(l)$ and observing that 
$$P^1(\O_X(1)) \otimes \O_X(l) \cong P^1(\O_X(l+1))$$
by \cite[(2.2)]{ei}, we get the commutative diagram
$$\xymatrix{& V \otimes H^0(\O_X(l)) \ar[d]^{f_l} \ar[dr]^{h_l} & \\ & H^0(P^1(\O_X(l+1))) \ar[r]^{\ \ \ g_l} \ar[d] & H^0(\O_X(l+1)) & \\ & H^1(N_{X/\P^{n+2}}^*(l+1))}$$
Now $H^1(N_{X/\P^{n+2}}^*(l+1)) \cong H^1(N_{X/\P^{n+2}}(l-e-n-2))=0$, hence $f_l$ is surjective for every $l \ge 0$ and so is $g_l$ by \cite[Prop.~2.3]{ei}. It follows that $h_l$ is surjective for every $l \ge 0$ and the commutative diagram
$$\xymatrix{V \otimes H^0(\O_{\P^{n+2}}(l)) \ar[r] \ar[d]^{\Id_V \otimes r_l} & H^0(\O_{\P^{n+2}}(l+1)) \ar[d]^{r_{l+1}} \\ V \otimes H^0(\O_X(l)) \ar[r]^{h_l} & H^0(\O_X(l+1))}$$
shows by induction that $r_l : H^0(\O_{\P^{n+2}}(l)) \to H^0(\O_X(l))$ is surjective for every $l \ge 0$ and we are done.
\end{proof}

\begin{remark}
\label{sottoc}
Instead, there are many subcanonical curves $X \subset \P^3$ with $H^0(N_{X/\P^3}(-2))=0$, hence with $N_{X/\P^3}(-1)$ an Ulrich vector bundle (see for example \cite{be}).
\end{remark}

We now prove our first result.

\begin{proof}[Proof of Theorem \ref{main}]

If $X$ is embedded linearly in $\P^N$, we saw in the introduction that $N_{X/\P^N}(-1)$ is an Ulrich vector bundle. If $X$ is as in (ii) of Theorem \ref{main}, since $c_1(N_{X/\P^{n+2}}(-1))=K_X + (n+1)H$, we see that $N_{X/\P^N}(-1)$ is an Ulrich vector bundle by Lemma \ref{spe}. Moreover, if, in addition, $X$ does not contain a line, then $N_{X/\P^{n+2}}(-1)$ is very ample by \cite[Thm.~1]{ls}.

Vice versa assume that $X \subset \P^N$ is a smooth irreducible variety of dimension $n \ge 1$, degree $d \ge 1$ and such that $N_{X/\P^N}(-k)$ is an Ulrich vector bundle. In particular $H^0(N_{X/\P^N}(-k-1))=0$ and therefore $k \ge 1$, since, as is well known, $N_{X/\P^N}(-1)$ is globally generated.

If $d=1$ we have that $X=\P^n$ embedded linearly in $\P^N$. 

Assume from now on that $d \ge 2$ and let $H \in |\O_X(1)|$. 

We first show that $k=1$ and $\codim_{\P^N} X=2$.

Note that $N-n \ge 2$. In fact, if $N-n=1$ then $N_{X/\P^N}= \O_X(d)$ and to have $\O_X(d-k)$ Ulrich implies that
$H^0(\O_X(d-k-1))=H^n(\O_X(d-k-n))=0$, that is $k \ge d \ge 2$ and $H^0(\O_X(k-2))=0$, a contradiction.

Let $C$ be a general curve section of $X$ and let $g$ be its genus, so that
$$2g-2=[K_X+(n-1)H] \cdot H^{n-1} = K_X \cdot H^{n-1}+(n-1)d$$
and therefore
\begin{equation}
\label{gen}
K_X \cdot H^{n-1} = 2g-2-(n-1)d.
\end{equation}
Since $c_1(N_{X/\P^N}(-k))=K_X+(N+1-kN+kn)H$ we get, using \eqref{gen} that
$$\deg(N_{X/\P^N}(-k)_{|C}) = [K_X+(N+1-kN+kn)H] \cdot H^{n-1} = 2(d+g-1) -(k-1)(N-n)d.$$
Now \cite[Lemma 2.4(iii)]{ch} implies that
$$\deg(N_{X/\P^N}(-1)_{|C}) = (N-n)(d+g-1)$$
and we deduce that
$$(N-n-2)(d+g-1)+(k-1)(N-n)d=0$$
so that $k=1$ and $N-n=2$.

If $1 \le n \le 3$ we have that $X$ is as in (ii) of Theorem \ref{main} by Lemma \ref{spe}. 

To finish the proof, we show that the case $n \ge 4$ does not occur. In fact, if $n \ge 4$, then Barth-Larsen's type theorems (see for example \cite[Thm.~2.2]{h1}) give that $X$ is subcanonical and now Proposition \ref{cod2}(ii) implies that $N_{X/\P^N}(-1)$ is not Ulrich vector bundle, a contradiction. 
\end{proof}

\section{Other standard bundles associated to $X$}

In this section we give a very simple and complete answer to the question, for a smooth irreducible variety $X \subseteq \P^N$, of when $\Omega_X(k), {\Omega_{\P^N}}_{|X}(k)$ and ${T_{\P^N}}_{|X}(k)$ are Ulrich vector bundles for some integer $k$.
 
\begin{prop} 
\label{main2}

Let $X \subseteq \P^N$ be a smooth irreducible variety of dimension $n \ge 1$ and let $k$ be an integer. Then 
\begin{itemize}
\item [(i)] $\Omega_X(k)$ is an Ulrich vector bundle if and only if $k=2$ and $X$ is a line.
\item [(ii)] ${\Omega_{\P^N}}_{|X}(k)$ is an Ulrich vector bundle if and only if $k=2$ and $X$ is a rational normal curve in $\P^N$.
\item [(iii)] ${T_{\P^N}}_{|X}(k)$ is an Ulrich vector bundle if and only if either $N=2, k=-1$ and $X$ is a conic or $N=1, k=-2$ and $X=\P^1$.
\end{itemize}
\end{prop}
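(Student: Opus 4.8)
The plan is to treat the three bundles uniformly by restricting to a general curve section and reading off the resulting slope identity, then to bound the twist $k$ using global generation of the standard twists of $\Omega_{\P^N}$ and $T_{\P^N}$, and finally to close each case by an integrality/sign analysis. Throughout, let $\E$ denote the relevant twisted bundle, assumed Ulrich, let $C$ be a general curve section, $d=\deg X$, and $g$ the sectional genus. By \cite[(3.4)]{b1}, $\E_{|C}$ is Ulrich on $C$, so $\E_{|C}(-1)$ has no cohomology; then $\chi(\E_{|C}(-1))=0$ and Riemann--Roch on $C$ give the slope identity $\deg(\E_{|C})=\rk(\E)(d+g-1)$. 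The Ulrich hypothesis also yields $H^0(\E(-1))=0$ and, via $0$-regularity, global generation of $\E$; I will combine the former with the classical facts that $\Omega_{\P^N}(j)$ is globally generated exactly for $j\ge 2$ and $T_{\P^N}(j)$ exactly for $j\ge -1$.

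For ${T_{\P^N}}_{|X}(k)$ (part (iii)), since $c_1=(N+1+Nk)H$, the slope identity reads $(1+Nk)d=N(g-1)$. If $k\ge 0$ then $T_{\P^N}(k-1)$, hence ${T_{\P^N}}_{|X}(k-1)$, is globally generated and nonzero, contradicting $H^0(\E(-1))=0$; so $k\le -1$, whence $1+Nk\le 0$ and thus $g\le 1$, with $g=1$ quickly excluded, leaving $g=0$ and $(1+Nk)d=-N$. Running over $k\le -1$, integrality of $d\ge 1$ leaves only $(N,k,d)=(2,-1,2)$ and $(1,-2,1)$, i.e.\ a conic in $\P^2$ and $X=\P^1=\P^1$. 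For ${\Omega_{\P^N}}_{|X}(k)$ (part (ii)) the identity is $(Nk-2N-1)d=N(g-1)$; as above $H^0(\E(-1))=0$ and global generation of $\Omega_{\P^N}(j)$ for $j\ge 2$ force $k\le 2$, so the coefficient is $\le -1$, giving $g=0$, $d=N$, $k=2$. Reading $H^0$ and $H^1$ of $\E(-1)={\Omega_{\P^N}}_{|X}(1)$ off the twisted Euler sequence $0\to{\Omega_{\P^N}}_{|X}(1)\to\O_X^{\oplus(N+1)}\to\O_X(1)\to 0$ shows the coordinate forms are linearly independent and span $H^0(\O_X(1))$, so $X$ is non-degenerate and linearly normal with $h^0(\O_X(1))=N+1$. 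Hence the $\Delta$-genus is $n+d-h^0(\O_X(1))=n-1$, while the genus-zero section forces $\Delta(X)=\Delta(C)=g-h^1(\O_C(1))\le 0$; since $\Delta\ge 0$ this gives $n=1$, so $X$ is a linearly normal rational curve of degree $N$ in $\P^N$, a rational normal curve.

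For $\Omega_X(k)$ (part (i), rank $n$) I would, when $n\ge 2$, restrict and twist the conormal sequence to get $0\to\O_C(k-2)^{\oplus(n-1)}\to\Omega_X(k-1)_{|C}\to\omega_C(k-1)\to 0$; since $\Omega_X(k-1)_{|C}$ has no sections, neither does $\O_C(k-2)$, forcing $k\le 1$. The slope identity $nkd=(2n-1)d+(n-2)(g-1)$ then has $nk-2n+1\le 1-n\le -1$, so $(n-2)(g-1)\le -d<0$: this is an immediate contradiction for $n=2$, and for $n\ge 3$ the inequality $2n-1-nk\ge n-1$ forces $d(n-1)\le n-2$, again impossible. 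For $n=1$, $\Omega_X(k)=\omega_X(k)$ is a line bundle, and the Ulrich conditions $\deg=d+g-1$ together with $H^1(\omega_X(k-1))\cong H^0(\O_X(1-k))^{*}=0$ force $k\ge 2$ and then $d=1$, $g=0$, $k=2$, i.e.\ a line.

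It then remains to verify the converse in each case, namely that the listed bundles really are Ulrich; this is a direct cohomology computation on $\P^1$, on a conic, and on a rational normal curve through the (co)normal and Euler sequences. I expect the main obstacle to be part (ii): unlike (iii), where the numerics alone pin down $N$, $k$ and $d$, here the slope identity leaves $n$ unconstrained, and the genuine work lies in extracting linear normality from the vanishing $H^1(\E(-1))=0$ and then invoking that a non-degenerate, linearly normal variety of sectional genus $0$ has minimal degree to conclude $n=1$. A secondary subtlety is checking in part (i) that the restricted conormal sub-bundle $\O_C(k-2)^{\oplus(n-1)}$ is nonzero (so the bound $k\le 1$ really needs $n\ge 2$) and handling the curve case $n=1$ separately.
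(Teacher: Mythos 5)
Your proposal is correct in substance and reaches all three classifications, but it diverges from the paper's proof at exactly the two places where the slope numerics alone do not suffice. The paper handles both with the ACM property of Ulrich bundles: for (i) with $n\ge 2$ it notes that Ulrich forces $H^1(\Omega_X)=0$, contradicting the nonvanishing of the ample class in $H^1(\Omega_X)$; for (ii) it rules out $n\ge 2$ because the Ulrich vanishing $H^1({\Omega_{\P^N}}_{|X})=0$ (available since $p=2\le n$) contradicts the Euler sequence, and then for $n=1$ it gets nondegeneracy from $0=H^0({\Omega_{\P^r}}_{|X}(1)\oplus\O_X^{\oplus(N-r)})$ where $\P^r$ is the span. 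Your substitutes work but are heavier: in (i) the restricted conormal sequence plus the slope identity closes the case $n\ge2$ cleanly (and avoids the Hodge-theoretic input), and in (ii) the twisted Euler sequence together with $H^0(\E(-1))=H^1(\E(-1))=0$ does give $h^0(\O_X(1))=N+1$ with $X$ nondegenerate and linearly normal. However, your intermediate claim $\Delta(X)=\Delta(C)$ is not justified as written: in general one only has $\Delta(X)\ge\Delta(C)$, and since $\Delta(X)=n-1$ while $\Delta(C)=0$, the asserted equality is precisely equivalent to $n=1$, i.e.\ to what you are trying to prove. The step is rescued by the fact you invoke at the end (for an ample and spanned polarization Fujita's inequality gives $g\ge\Delta\ge 0$, so sectional genus $0$ forces $\Delta(X)=n-1=0$ and minimal degree), but that is a genuinely nontrivial classical input, whereas the paper's Euler-sequence argument needs nothing beyond the definition of Ulrich. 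Part (iii), the curve case of (i), and the converse verifications coincide with the paper's treatment.
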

\begin{proof}
We  let $g$ to be the sectional genus of $X$ and $d$ its degree. To see (i), the assertion being obvious when $k=2$ and $X$ is a line, assume vice versa that $\Omega_X(k)$ is an Ulrich vector bundle. If $n \ge 2$, since Ulrich vector bundles are ACM by \cite[(3.1)]{b1}, we get the contradiction $H^1(\Omega_X)=0$. If $n=1$, then the Ulrich condition $H^i(\Omega_X(k-1))=0$ for $i \ge 0$ gives that either $g=0$ and $d=1,k=2$ or $g \ge 1$. In the latter case we have that $H^0(\O_X(1-k))=0$, hence $k \ge 2$, but then $H^0(\Omega_X(k-1)) \ne 0$, a contradiction. This proves (i).

Now we deal with (ii) and (iii). 

If $X$ is a rational normal curve in $\P^N$, then, as is well known, ${\Omega_{\P^N}}_{|X} \cong \O_{\P^1}(-N-1)^{\oplus N}$, and therefore
$$H^i({\Omega_{\P^N}}_{|X}(1)) \cong H^i(\O_{\P^1}(-1))^{\oplus N}=0$$
for every $i \ge 0$, so that ${\Omega_{\P^N}}_{|X}(2)$ is an Ulrich vector bundle. 

If $N=2, k=-1$ and $X$ is a conic or if $N=1, k=-2$ and $X=\P^1$, it is easily checked that ${T_{\P^N}}_{|X}(k)$ is an Ulrich vector bundle. 

Vice versa let $\E$ be either ${\Omega_{\P^N}}_{|X}(k)$ or ${T_{\P^N}}_{|X}(k)$ and assume that $\E$ is an Ulrich vector bundle. As is well known, since ${\Omega_{\P^N}}_{|X}(2)$ and ${T_{\P^N}}_{|X}(-1)$ are globally generated, we get that $k \le 2$ in the first case and $k \le -1$ in the second case. Let $C$ be a general curve section of $X$, let $g$ be its genus and $d$ its degree. Set
$$\varepsilon = \begin{cases} -1 & \text{if } \E={\Omega_{\P^N}}_{|X}(k), \\
1 & \text{if } \E={T_{\P^N}}_{|X}(k) \end{cases}$$ 
so that $c_1(\E)=[\varepsilon(N+1) + kN]H$. Then \cite[Lemma 2.4(iii)]{ch} implies that
$$[\varepsilon(N+1) + kN]d = N(d+g-1)$$
that is
\begin{equation}
\label{eq}
N(g-1)+[N(1-\varepsilon-k)-\varepsilon]d = 0.
\end{equation}
If $d=1$ (hence in particular if $N=1$) we deduce that $N=1, X=\P^1$ and either $\varepsilon=-1, k=2$ or $\varepsilon=1, k=-2$, giving rise to the cases in (ii) and (iii). Suppose then that $d \ge 2$, hence also $N \ge 2$. It follows easily that $N(1-\varepsilon-k)-\varepsilon>0$, hence \eqref{eq} implies that $g=0$ and either $\varepsilon=-1, k=2, d=N$ or $\varepsilon=1, k=-1, d=N=2$. In the latter case we have the case of a conic in (iii). To finish the proof, consider the case $\varepsilon=-1, k=2, d=N$. If $n \ge 2$, then the Ulrich condition for ${\Omega_{\P^N}}_{|X}(2)$ gives in particular that $H^1({\Omega_{\P^N}}_{|X})=0$. But then the Euler sequence
$$0 \to {\Omega_{\P^N}}_{|X} \to \O_X(-1)^{\oplus (N+1)} \to \O_X \to 0$$ 
gives a contradiction. Hence $n=1$ and if $X$ spans a $\P^r$ we have that 
$$0=H^0({\Omega_{\P^N}}_{|X}(1))=H^0({\Omega_{\P^r}}_{|X}(1) \oplus \O_X^{N-r})$$ 
so that $r=N$ and $X$ is a rational normal curve in $\P^N$. 
\end{proof}

\section{A personal note about Alberto Collino}
\label{note}

I first met Alberto while I was a graduate student at Brown University in the mid '80's. He was a visiting professor and I learned then the way, I think, he always has been: a very pleasant person in human relationships
and one always available. He made me feel confident. This was confirmed over the years (even recently), when, after coming back to Italy, I visited Torino. He was a complete mathematician and, among his merits, it is not to be underestimated the contribution that he gave to the Italian algebraic geometry. He was part of the generation of important mathematicians that went to the East Cost to follow a Ph.D. program and then brought back to Italy the way of working learned there.


\begin{thebibliography}{CMRPL}

\bibitem[B]{b1} A.~Beauville.
\textit{An introduction to Ulrich bundles}. 
Eur. J. Math. \textbf{4} (2018), no.~1, 26-36.

\bibitem[BC]{bc} E.~Ballico, L.~Chiantini.
\textit{On smooth subcanonical varieties of codimension 2 in $\P^n, n \ge 4$}. 
Ann. Mat. Pura Appl. (4) \textbf{135} (1983), 99-117 (1984). 

\bibitem[BE]{be} E.~Ballico, Ph.~Ellia.
\textit{Some more examples of curves in $\P^3$ with stable normal bundle}. 
J. Reine Angew. Math. \textbf{350} (1984), 87-93.

\bibitem[BMPT]{bmpt} V.~Benedetti, P.~Montero, Y.~Prieto Monta\~{n}ez, S.~Troncoso.
\textit{Projective manifolds whose tangent bundle is Ulrich}. 
Preprint 2021, arXiv:2108.13944.

\bibitem[C1]{c1} G.~Casnati.
\textit{Special Ulrich bundles on non-special surfaces with $p_g=q=0$}. 
Internat. J. Math. \textbf{28} (2017), no.~8, 1750061, 18 pp.

\bibitem[C2]{c2} G.~Casnati.
\textit{Tangent, cotangent, normal and conormal bundles are almost never instanton bundles}.
Preprint 2022, in preparation.

\bibitem[CH]{ch} M.~Casanellas, R.~Hartshorne.
\textit{Stable Ulrich bundles}. With an appendix by F.~Geiss, F.-O.~Schreyer.
Internat. J. Math. \textbf{23} (2012), no.~8, 1250083, 50 pp. 

\bibitem[CMRPL]{cmp} L.~Costa, R.~M.~Mir\'o-Roig, J.~Pons-Llopis.
\textit{Ulrich bundles}.
De Gruyter Studies in Mathematics, \textbf{77}, De Gruyter 2021. 

\bibitem[DP]{dp} W.~Decker, S.~Popescu.
\textit{On surfaces in $\P^4$ and $3$-folds in $\P^5$}. 
In: Vector bundles in algebraic geometry (Durham, 1993), 69-100, London Math. Soc. Lecture Note Ser., \textbf{208}, Cambridge Univ. Press, Cambridge, 1995.

\bibitem[Ei]{ei} L.~Ein.
\textit{Vanishing theorems for varieties of low codimension}.
In: Algebraic geometry (Sundance, UT, 1986), 71-75, Lecture Notes in Math., 1311, Springer, Berlin, 1988. 

\bibitem[El]{e} Ph.~Ellia.
\textit{Exemples de courbes de $\P^3$ \`a fibr\'e normal semi-stable, stable}.
Math. Ann. \textbf{264} (1983), no.~3, 389-396.

\bibitem[EG]{eg} E.~G.~Evans, P.~Griffith.
\textit{The syzygy problem}.
Ann. of Math. (2) \textbf{114} (1981), no.~2, 323-333.

\bibitem[EH]{eh}G.~Ellingsrud, A.~Hirschowitz.
\textit{Sur le fibr\'e normal des courbes gauches}.
C. R. Acad. Sci. Paris S\'er. I Math. \textbf{299} (1984), no.~7, 245-248. 

\bibitem[ES]{es} D.~Eisenbud, F.-O.~Schreyer.
\textit{Resultants and Chow forms via exterior syzygies}. 
J. Amer. Math. Soc. \textbf{16} (2003), no.~3, 537-579.

\bibitem[H1]{h1} R.~Hartshorne.
\textit{Varieties of small codimension in projective space}. 
Bull. Amer. Math. Soc. \textbf{80} (1974), 1017-1032. 

\bibitem[H2]{h2} R.~Hartshorne. 
\textit{Algebraic geometry}. 
Graduate Texts in Mathematics \textbf{52}. Springer-Verlag, New York-Heidelberg, 1977.

\bibitem[K]{kl} J.~O.~Kleppe.
\textit{The Hilbert-flag scheme, its properties and its connection with the Hilbert scheme. Applications to curves in the 3-space}. Thesis, University of Oslo (1981).

\bibitem[KMR1]{kmr1} J.~O.~Kleppe, R.~M.~Mir\'o-Roig. 
\textit{On the normal sheaf of determinantal varieties}.
J. Reine Angew. Math. \textbf{719} (2016), 173-209.

\bibitem[KMR2]{kmr2} J.~O.~Kleppe, R.~M.~Mir\'o-Roig. 
\textit{The representation type of determinantal varieties}.
Algebr. Represent. Theory \textbf{20} (2017), no.~4, 1029-1059.

\bibitem[L]{laz} R.~Lazarsfeld. \textit{Positivity in algebraic geometry, I}.
Ergebnisse der Mathematik und ihrer Grenzgebiete, 3. Folge  \textbf{48}, Springer-Verlag, Berlin, 2004.

\bibitem[LR]{lr} A.~F.~Lopez, D.~Raychaudhury. 
In preparation.

\bibitem[LS]{ls} A.~F.~Lopez, J.~C.~Sierra. 
\textit{A geometrical view of Ulrich vector bundles}.
Preprint 2021, arXiv:2105.05979. To appear on  Int. Math. Res. Not..

\bibitem[OS]{os} A.~Otwinowska, M.~Saito. 
\textit{Monodromy of a family of hypersurfaces containing a given subvariety}.
Ann. Sci. École Norm. Sup. (4) \textbf{38} (2005), no.~3, 365-386. 

\bibitem[PS]{ps} C.~Peskine, L.~Szpiro.
\textit{Liaison des vari\'et\'es alg\'ebriques. I}. 
Invent. Math. \textbf{26} (1974), 271-302. 

\bibitem[R1]{ra1} K.~Ranestad.
\textit{On smooth surfaces of degree ten in projective four space}. 
Ph.D. thesis, University of Oslo, 1988. {\url https://silo.tips/download/on-smooth-surfaces-of-degree-10-in-the-projective-fourspace}

\bibitem[R2]{ra2} K.~Ranestad.
\textit{Surfaces of degree 10 in the projective fourspace}. 
In: Problems in the theory of surfaces and their classification (Cortona, 1988), 271-307, Sympos. Math., XXXII, Academic Press, London, 1991.




\end{thebibliography}
\end{document}